\newtheorem{theorem}{Theorem}
\newtheorem{corollary}[theorem]{Corollary}
\newtheorem{definition}[theorem]{Definition}
\newtheorem*{remark}{Remark}
\Crefname{conjecture}{Conjecture}{Conjectures}
\theoremstyle{plain}
\theoremstyle{plain}
\author{Robert Schneider} 
\address{Department of Mathematics and Computer Science\newline
Emory University\newline
400 Dowman Dr., W401\newline
Atlanta, Georgia 30322}
\email{robert.schneider@emory.edu}
\keywords{$q$-series, mock theta function, quantum modular form, divergent series}
\subjclass{33D15,40A30}
\title{Alternating ``strange'' functions}
\begin{document}

\begin{abstract}
In this note we consider infinite series similar to the ``strange'' function $F(q)$ of Kontsevich studied by Zagier, Bryson-Ono-Pitman-Rhoades, Bringmann-Folsom-Rhoades, Rolen-Schneider, and others in connection to quantum modular forms. We show that a class of ``strange'' alternating series that are well-defined almost nowhere in the complex plane can be added (using a modified definition of limits) to familiar infinite products to produce convergent $q$-hypergeometric series, of a shape that specializes to Ramanujan's mock theta function $f(q)$, Zagier's quantum modular form $\sigma(q)$, and other interesting number-theoretic objects. We also discuss Ces\`{a}ro sums for these alternating series, and continued fractions that are similarly ``strange''.
\end{abstract}

\maketitle

\section{Introduction and statement of results}\label{Sect1}
In a 1997 lecture at the Max Planck Institute for Mathematics, 
Fields medalist Maxim Kontsevich discussed an almost nonsensical $q$-hypergeometric series \cite{Zagier_Vassiliev}
\begin{equation}\label{F(q)def}
F(q):=\sum_{n=0}^{\infty}(q;q)_n,
\end{equation}  
where the {\it $q$-Pochhammer symbol} is defined by $(a;q)_0:=1$, $(a;q)_n:=\prod_{j=0}^{n-1}(1-aq^j)$, and $(a;q)_{\infty}:=\lim_{n\to \infty}(a;q)_n$ for $a,q\in \mathbb C, |q|<1$. This series $F(q)$ is often referred to in the literature as {\it Kontsevich's ``strange'' function}, and has since been studied deeply by Zagier \cite{Zagier_Vassiliev} --- it was one of his prototypes for quantum modular forms, which enjoy beautiful transformations similar to classical modular forms, and also resemble objects in quantum theory \cite{Zagier_quantum} --- as well as by 
other authors \cite{BFR,BOPR,RolenSchneider} in connection to quantum modularity, unimodal sequences, and other topics. 

There are many reasons to say 
the series (\ref{F(q)def}) is ``strange'' (see \cite{Zagier_Vassiliev}). For brevity, 
let us merely note that as $n\to \infty$, then $(q;q)_{n}$ converges on the unit disk, is essentially singular on the unit circle (except at roots of unity, where it vanishes),  
and diverges when $|q|>1$. Thus $\sum_{n\geq 0} (q;q)_n$ converges {almost nowhere} 
in the complex plane. 
However, at $q=\zeta_m$ an $m$th order root of unity, $F$ is suddenly very well-behaved: because $(\zeta_m;\zeta_m)_{n}=0$ for $n\geq m$, then as $q\to \zeta_m$ radially,  $F(\zeta_m):=\lim_{q\to\zeta_m} F(q)$ is just a polynomial in $\mathbb Z[\zeta_m]$. 

Now let us turn our attention to the alternating case of this series, viz. 
\begin{equation}
\widetilde{F}(q):=\sum_{n=0}^{\infty}(-1)^n(q;q)_{n},
\end{equation}
a summation that has been studied by Cohen \cite{BOPR}, which is similarly ``strange'': it doesn't converge anywhere in $\mathbb C$ except at roots of unity, where it is a polynomial. 
In fact, 
computational examples suggest the odd and even partial sums of $\widetilde{F}(q)$ oscillate asymptotically between two convergent $q$-series. 

To capture this oscillatory behavior, let us adopt a 
notation we will use throughout. If $S$ is an infinite series, we will write $S_+$ to denote the limit of the sequence of odd partial sums, and $S_-$ for the limit of the even partial sums, if these limits exist (clearly if $S$ converges, then $S_+=S_-=S$). 

Interestingly, like $F(q)$, the ``strange'' series $\widetilde{F}(q)$ is closely connected to a sum Zagier provided as another prototype for quantum modularity (when multiplied by $q^{1/24}$) \cite{Zagier_quantum}, the function 
\begin{equation}\label{sigma}
\sigma(q):=\sum_{n=0}^{\infty}\frac{q^{n(n+1)/2}}{(-q;q)_n}=1+\sum_{n=0}^{\infty}(-1)^n q^{n+1}(q;q)_n
\end{equation}
from Ramanujan's ``lost'' notebook, with the right-hand equality due to Andrews \cite{AJO}. If we use the convention introduced above and write $\widetilde{F}_{+}(q)$ (resp. $\widetilde{F}_{-}(q)$) to denote the limit of the odd (resp. even) partial sums of $\widetilde{F}$, we can state this connection explicitly, depending on the choice of ``$+$'' or ``$-$''.

\begin{theorem}\label{thm1}
For $0<|q| < 1$ we have 
\begin{equation*}
\sigma(q)=2\widetilde{F}_{\pm}(q)\pm (q;q)_{\infty}.
\end{equation*}
\end{theorem}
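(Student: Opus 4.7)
The plan is to obtain closed-form expressions for $\widetilde{F}_\pm(q)$ by grouping consecutive terms of $\widetilde{F}(q)$, then to massage Andrews' right-hand representation of $\sigma(q)$ into a form that matches these, with the quantity $(q;q)_\infty$ appearing naturally from a telescoping identity.

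First I would group adjacent pairs in the partial sum $S_N:=\sum_{n=0}^N(-1)^n(q;q)_n$ using the basic relation $(q;q)_{2j}-(q;q)_{2j+1}=(q;q)_{2j}[1-(1-q^{2j+1})]=q^{2j+1}(q;q)_{2j}$. This gives the odd-indexed partial sums
\begin{equation*}
S_{2k+1}=\sum_{j=0}^{k}q^{2j+1}(q;q)_{2j},
\end{equation*}
a series that converges absolutely as $k\to\infty$ since $|q|<1$, yielding $\widetilde{F}_+(q)=\sum_{j=0}^{\infty}q^{2j+1}(q;q)_{2j}$. Since $S_{2k}=S_{2k+1}+(q;q)_{2k+1}$, passing to the limit gives the simple relation $\widetilde{F}_-(q)=\widetilde{F}_+(q)+(q;q)_\infty$.

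Next I would use Andrews' identity on the right side of (\ref{sigma}) and split by parity to write
\begin{equation*}
\sigma(q)-1=\sum_{j=0}^{\infty}q^{2j+1}(q;q)_{2j}-\sum_{j=0}^{\infty}q^{2j+2}(q;q)_{2j+1}=\widetilde{F}_+(q)-\sum_{j=0}^{\infty}q^{2j+2}(q;q)_{2j+1}.
\end{equation*}
The key step is to evaluate the remaining tail using the telescoping identity $q^{n+1}(q;q)_n=(q;q)_n-(q;q)_{n+1}$, which follows directly from $(q;q)_{n+1}=(1-q^{n+1})(q;q)_n$. This telescoping gives $\sum_{n=0}^{\infty}q^{n+1}(q;q)_n=1-(q;q)_\infty$, and splitting that sum by parity yields $\sum_{j=0}^{\infty}q^{2j+2}(q;q)_{2j+1}=1-(q;q)_\infty-\widetilde{F}_+(q)$.

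Substituting back produces $\sigma(q)=1+\widetilde{F}_+(q)-\bigl[1-(q;q)_\infty-\widetilde{F}_+(q)\bigr]=2\widetilde{F}_+(q)+(q;q)_\infty$, which is the ``$+$'' case. The ``$-$'' case then follows immediately by applying $\widetilde{F}_-(q)=\widetilde{F}_+(q)+(q;q)_\infty$ from the first step. The main obstacle is really just the bookkeeping of the parity-split sums; the crucial observation is the telescoping identity $q^{n+1}(q;q)_n=(q;q)_n-(q;q)_{n+1}$, which simultaneously explains why $(q;q)_\infty$ is exactly the right correction term on both sides.
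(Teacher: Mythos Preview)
Your proof is correct and uses essentially the same ingredients as the paper: the pair-grouping $(q;q)_{2j}-(q;q)_{2j+1}=q^{2j+1}(q;q)_{2j}$ to identify $\widetilde F_+(q)=\sum_{j\ge 0}q^{2j+1}(q;q)_{2j}$, the telescoping identity $\sum_{n\ge 0}q^{n+1}(q;q)_n=1-(q;q)_\infty$, and Andrews' representation of $\sigma(q)$. The only difference is organizational: the paper adds the telescoping identity directly to Andrews' formula so that the odd-index terms cancel in one stroke, giving $\sigma(q)-(q;q)_\infty=2\sum_{j\ge 0}q^{2j+1}(q;q)_{2j}$, whereas you first isolate $\widetilde F_+$ and then use the telescoping to evaluate the leftover odd-index tail --- but this is the same computation rearranged.
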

%
%


We can make further sense of alternating ``strange'' series such as this using Ces\`{a}ro summation, a well-known alternative definition of the limits of infinite series 
(see \cite{Hardy_divergent}). 

\begin{definition} The Ces\`{a}ro sum of an infinite series is the limit of the arithmetic mean of successive partial sums, if the limit exists.
\end{definition}

In particular, it follows immediately that the Ces\`{a}ro sum of the series $S$ is the average $\frac{1}{2}(S_+ + S_-)$ if the limits $S_+,S_-$ exist. Then Theorem \ref{thm1} leads to the following fact.

\begin{corollary}
We have that $\frac{1}{2}\sigma(q)$ is the Ces\`{a}ro sum of the ``strange'' function $\widetilde{F}(q)$.
\end{corollary}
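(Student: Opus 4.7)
The plan is to deduce the corollary directly from Theorem \ref{thm1} together with the observation, already noted in the paragraph preceding the statement, that whenever the odd and even partial-sum limits $S_{+}, S_{-}$ of a series $S$ both exist, the Ces\`{a}ro sum of $S$ exists and equals $\tfrac{1}{2}(S_{+}+S_{-})$. So the work reduces to identifying $\widetilde{F}_{+}(q)+\widetilde{F}_{-}(q)$ explicitly.

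First I would write down the two instances of Theorem \ref{thm1} separately:
\begin{equation*}
\sigma(q) \;=\; 2\widetilde{F}_{+}(q) + (q;q)_{\infty}, \qquad \sigma(q) \;=\; 2\widetilde{F}_{-}(q) - (q;q)_{\infty}.
\end{equation*}
Adding these two identities cancels the infinite product $(q;q)_{\infty}$ and yields $2\sigma(q) = 2\bigl(\widetilde{F}_{+}(q) + \widetilde{F}_{-}(q)\bigr)$, hence
\begin{equation*}
\widetilde{F}_{+}(q)+\widetilde{F}_{-}(q) \;=\; \sigma(q).
\end{equation*}

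Next I would invoke the remark on Ces\`{a}ro sums: since Theorem \ref{thm1} asserts that both $\widetilde{F}_{+}(q)$ and $\widetilde{F}_{-}(q)$ exist for $0<|q|<1$, the arithmetic mean of the partial sums of $\widetilde{F}(q)$ converges to $\tfrac{1}{2}(\widetilde{F}_{+}(q)+\widetilde{F}_{-}(q))$, which by the display above equals $\tfrac{1}{2}\sigma(q)$. For completeness I would briefly justify the mean-convergence claim itself: if the odd partial sums tend to $L_{+}$ and the even partial sums tend to $L_{-}$, then averaging an equal (or nearly equal) number of each in $\tfrac{1}{N}\sum_{k=1}^{N} s_{k}$ drives the Ces\`{a}ro mean to $\tfrac{1}{2}(L_{+}+L_{-})$, a one-line $\eps$-argument.

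There is essentially no obstacle here; the only thing to double-check is the sign bookkeeping in the ``$\pm$'' version of Theorem \ref{thm1}, so that adding the two cases really does eliminate $(q;q)_{\infty}$ rather than doubling it. Everything else is a direct consequence of the definition of Ces\`{a}ro summation combined with the theorem.
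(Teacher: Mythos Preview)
Your argument is correct and is essentially the paper's own approach: the corollary is deduced directly from Theorem~\ref{thm1} by averaging the ``$+$'' and ``$-$'' cases to cancel $(q;q)_{\infty}$, together with the observation (stated just before the corollary) that the Ces\`{a}ro sum equals $\tfrac{1}{2}(S_{+}+S_{-})$ when both limits exist. If anything, your write-up is slightly more detailed than the paper's one-line justification.
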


A similar relation to Theorem \ref{thm1} involves Ramanujan's prototype $f(q)$ for a mock theta function 
\begin{equation}\label{f(q)def}
f(q):=\sum_{n=0}^{\infty}\frac{q^{n^2}}{(-q;q)_n^{2}}=1-\sum_{n=1}^{\infty}\frac{(-1)^{n}q^n }{(-q;q)_n},
\end{equation} 
the right-hand side of which is due to Fine (see (26.22) in \cite{Fine}, Ch. 3). 
Now, if we define 
\begin{equation}
\widetilde{\phi}(q):=\sum_{n=0}^{\infty}\frac{(-1)^n}{(-q;q)_n},
\end{equation}
which is easily seen to be ``strange'' like the previous cases, and write  $\widetilde{\phi}_{\pm}$ for limits of the odd/even partial sums as above, we can write $f(q)$ in terms of the ``strange'' series and an infinite product.

\begin{theorem}\label{thm2}
For $0<|q|<1$ 
we have 
\begin{equation*}
f(q)=2\widetilde{\phi}_{\pm}(q)\pm \frac{1}{(-q;q)_{\infty}}.
\end{equation*}
\end{theorem}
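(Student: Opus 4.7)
The plan mirrors the situation for $\widetilde{F}(q)$ and $\sigma(q)$ in Theorem~\ref{thm1}: rewrite Fine's form of $f(q)$ on the right-hand side of (\ref{f(q)def}) using a telescoping identity, and then recognize the resulting partial sums as a sum of two consecutive partial sums of $\widetilde{\phi}$. The key elementary identity I would use is
\[
\frac{q^{n+1}}{(-q;q)_{n+1}} \;=\; \frac{1}{(-q;q)_n} - \frac{1}{(-q;q)_{n+1}},
\]
which follows at once from $(-q;q)_{n+1} = (1+q^{n+1})(-q;q)_n$.

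Reindexing $n\mapsto m+1$ in Fine's expression and inserting the identity above yields
\[
f(q) \;=\; 1 + \sum_{m=0}^{\infty} (-1)^m\!\left[\frac{1}{(-q;q)_m} - \frac{1}{(-q;q)_{m+1}}\right].
\]
Writing $\widetilde{\phi}_N(q) := \sum_{n=0}^N (-1)^n/(-q;q)_n$, I would then truncate the outer sum at $m=N-1$, split it in two, and shift $k=m+1$ in the second piece; the boundary terms $1$ and $u_0=1/(-q;q)_0 = 1$ cancel, leaving exactly
\[
\widetilde{\phi}_{N-1}(q)+\widetilde{\phi}_{N}(q) \;\longrightarrow\; f(q) \qquad (N\to\infty),
\]
since Fine's series converges absolutely for $0<|q|<1$.

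To extract $\widetilde{\phi}_+$ and $\widetilde{\phi}_-$ separately, I would combine this with the trivial single-term relation
\[
\widetilde{\phi}_N(q)-\widetilde{\phi}_{N-1}(q) \;=\; \frac{(-1)^N}{(-q;q)_N},
\]
and note that along even $N$ the right-hand side tends to $+1/(-q;q)_\infty$, while along odd $N$ it tends to $-1/(-q;q)_\infty$. Restricting to $N=2k$ and adding/subtracting the two limit relations then forces both subsequential limits to exist and gives
\[
\widetilde{\phi}_{-}(q)=\tfrac{1}{2}\!\left(f(q)+\tfrac{1}{(-q;q)_\infty}\right), \qquad \widetilde{\phi}_{+}(q)=\tfrac{1}{2}\!\left(f(q)-\tfrac{1}{(-q;q)_\infty}\right),
\]
and rearranging yields the theorem in both sign choices simultaneously.

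The only real obstacle is bookkeeping of parity conventions: one must track carefully which partial-sum subsequence produces the larger limit so that the sign $\pm$ in the conclusion lines up correctly with the $+$ being odd partial sums and $-$ being even partial sums, in accordance with the convention fixed in Section~\ref{Sect1}. No analytic subtleties arise beyond the fact that $\frac{1}{(-q;q)_N}\to\frac{1}{(-q;q)_\infty}$ uniformly on compact subsets of $|q|<1$, which justifies passing to limits above.
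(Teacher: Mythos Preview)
Your proof is correct and follows essentially the same approach as the paper: both arguments rest on Fine's expression for $f(q)$ together with the telescoping identity $\frac{q^{n+1}}{(-q;q)_{n+1}}=\frac{1}{(-q;q)_n}-\frac{1}{(-q;q)_{n+1}}$. The only difference is organizational: the paper separately derives $\frac{1}{(-q;q)_\infty}=1-\sum_{n\ge 1}\frac{q^n}{(-q;q)_n}$ and $\widetilde{\phi}_+(q)=\sum_{n\ge 0}\frac{q^{2n+1}}{(-q;q)_{2n+1}}$ and then combines these with Fine's identity, whereas you go directly to the relation $\widetilde{\phi}_{N-1}+\widetilde{\phi}_N\to f(q)$ and split off the single-term difference; your route is slightly more streamlined and makes the existence of both subsequential limits explicit, but the underlying mechanism is the same.
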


Again, the Ces\`{a}ro sum results easily from this theorem.

\begin{corollary}
We have that $\frac{1}{2}f(q)$ is the Ces\`{a}ro sum of the ``strange'' function $\widetilde{\phi}(q)$.
\end{corollary}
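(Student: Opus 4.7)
The plan is to derive the corollary as an immediate consequence of Theorem~\ref{thm2} combined with the observation (noted in the excerpt) that whenever the odd and even partial sum limits $S_+$ and $S_-$ of a series $S$ both exist, the Ces\`aro sum of $S$ equals $\tfrac{1}{2}(S_+ + S_-)$. So the first step is just to record this general principle: if $s_N$ is the $N$th partial sum, then the arithmetic means $\tfrac{1}{N+1}\sum_{k=0}^{N}s_k$ can be split into contributions from odd and even indices, each of which converges in Ces\`aro fashion to $S_+$ and $S_-$ respectively, yielding the average.

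Next I would apply Theorem~\ref{thm2} twice, once with the ``$+$'' sign and once with the ``$-$'' sign, to obtain the pair of identities
\begin{equation*}
f(q) = 2\widetilde{\phi}_+(q) + \frac{1}{(-q;q)_\infty}, \qquad f(q) = 2\widetilde{\phi}_-(q) - \frac{1}{(-q;q)_\infty}.
\end{equation*}
In particular this verifies that both limits $\widetilde{\phi}_\pm(q)$ exist for $0<|q|<1$, which is exactly the hypothesis needed to invoke the Ces\`aro principle from the first step. Adding the two identities, the $\pm\frac{1}{(-q;q)_\infty}$ terms cancel and we obtain $2f(q) = 2\bigl(\widetilde{\phi}_+(q)+\widetilde{\phi}_-(q)\bigr)$, i.e.\ $\widetilde{\phi}_+(q)+\widetilde{\phi}_-(q)=f(q)$.

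Finally, plugging this into the formula for the Ces\`aro sum gives
\begin{equation*}
\tfrac{1}{2}\bigl(\widetilde{\phi}_+(q)+\widetilde{\phi}_-(q)\bigr) = \tfrac{1}{2}f(q),
\end{equation*}
which is the claim. There is no real obstacle here, since all the analytic content --- the existence of $\widetilde{\phi}_\pm(q)$ and the explicit formulas for them --- has already been packaged into Theorem~\ref{thm2}; the only thing worth being careful about is the justification that a series whose odd and even subsequences of partial sums converge separately is automatically Ces\`aro summable to their average, which is a short exercise in splitting the Ces\`aro mean by parity and passing to the limit.
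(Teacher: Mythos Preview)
Your proposal is correct and follows essentially the same route as the paper: the paper simply remarks that whenever the odd and even partial-sum limits $S_\pm$ exist the Ces\`aro sum equals $\tfrac{1}{2}(S_+ + S_-)$, and then applies Theorem~\ref{thm2} so that the $\pm\frac{1}{(-q;q)_\infty}$ terms cancel upon averaging. Your write-up is slightly more explicit about the parity-splitting justification, but the argument is the same.
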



%



Theorems \ref{thm1} and \ref{thm2} typify a general phenomenon: 
the combination of an alternating Kontsevich-style ``strange'' function 
with a related infinite product is a convergent $q$-series when we fix the $\pm$ sign in this modified definition of limits. 
Let us fix a few more notations in order to discuss this succinctly. As usual, we write 
\begin{equation*}
(a_1,a_2,...,a_r;q)_n:=(a_1;q)_n (a_2;q)_n \cdots (a_r;q)_n,
\end{equation*} 
along with the limiting case $(a_1,a_2,...,a_r;q)_{\infty}$ as $n\to\infty$. 
Associated to the sequence $a_1,a_2,...,a_r$ of complex coefficients, we will define a polynomial $\alpha_r(X)$ 
by the relation
\begin{equation}
(1-a_1 X)(1-a_2 X)\cdots (1-a_r X)=: 1-\alpha_r(X) X,
\end{equation}
thus
\begin{equation}
(a_1q,a_2q,...,a_rq;q)_n=\prod_{j=1}^{n}(1-\alpha_r(q^j)q^j), 
\end{equation}
and we follow this convention in also writing $(1-b_1 X)(1-b_2 X)\cdots (1-b_s X)=: 1-\beta_s(X) X$ for complex coefficients $b_1,b_2,...,b_s$. Moreover, 
we define a generalized alternating ``strange'' series:
\begin{equation}
\widetilde{\Phi}(a_1,a_2,...,a_r;b_1,b_2,...,b_s; q):=\sum_{n=0}^{\infty} (-1)^n \frac{(a_1q,a_2q,...,a_rq;q)_{n}}{(b_1q,b_2q,...,b_sq;q)_{n}}
\end{equation}
Thus $\widetilde{F}(q)$ is the case $\widetilde{\Phi}(1;0; q)$, and $\widetilde{\phi}(q)$ is the case $\widetilde{\Phi}(0;-1; q)$. 
We note that if $q$ is a $k$th root of $1/a_i$ for some $i$, then $\widetilde{\Phi}$ truncates after $k$ terms like $F$ and $\widetilde{F}$. As above, let $\widetilde{\Phi}_{\pm}$ denote the limit of the odd/even partial sums; then we can encapsulate the preceding theorems in the following statement.

\begin{theorem}\label{thm3}
For $0<|q|<1$ 
we have
\begin{flalign*}
2\widetilde{\Phi}_{\pm}&(a_1,a_2,...,a_r;b_1,b_2,...,b_s; q) \pm\frac{(a_1q,a_2q,...,a_rq;q)_{\infty}}{(b_1q,b_2q,...,b_sq;q)_{\infty}}\\
&=1-\sum_{n=1}^{\infty} \frac{(-1)^n q^{n}\left(\alpha_r(q^n)-\beta_s(q^n)\right) (a_1q,a_2q,...,a_rq;q)_{n-1}}{(b_1q,b_2q,...,b_s q;q)_{n}}.
\end{flalign*}
\end{theorem}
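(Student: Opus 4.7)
My plan is to identify the series on the right-hand side as a disguised telescoping sum whose $N$th partial sum equals $S_{N-1}+S_N$, where $S_N$ denotes the $N$th partial sum of $\widetilde{\Phi}$. To this end, set
\[
P_n := \frac{(a_1q,a_2q,\ldots,a_rq;q)_n}{(b_1q,b_2q,\ldots,b_sq;q)_n},\qquad S_N := \sum_{n=0}^{N}(-1)^n P_n,
\]
so that $\widetilde{\Phi}_{\pm}$ is the limit of $S_N$ along odd/even $N$, provided these limits exist. The defining relations $\prod_i(1-a_i q^n)=1-\alpha_r(q^n)q^n$ and $\prod_j(1-b_j q^n)=1-\beta_s(q^n)q^n$ then yield the ``forward-difference'' identity
\[
P_{n-1} - P_n \;=\; \frac{q^n\bigl(\alpha_r(q^n)-\beta_s(q^n)\bigr)\,(a_1q,\ldots,a_rq;q)_{n-1}}{(b_1q,\ldots,b_sq;q)_n},
\]
which is, up to the sign $(-1)^n$, exactly the $n$th summand of the right-hand side in Theorem~\ref{thm3}.

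Granted this identification, I would split $\sum_{n=1}^{N}(-1)^n(P_{n-1}-P_n)$ into two reindexable pieces: after a shift of index, one becomes $-S_{N-1}$ and the other $-(S_N-P_0)=-(S_N-1)$. Combining with the leading $+1$ of the right-hand side collapses its partial sum $R_N$ into the clean form $R_N = S_{N-1}+S_N$. Absolute convergence of the right-hand side is then routine for $0<|q|<1$: each summand is $O(|q|^n)$ because $|\alpha_r(q^n)-\beta_s(q^n)|$ and the factor $|(a_1q,\ldots,a_rq;q)_{n-1}/(b_1q,\ldots,b_sq;q)_n|$ are uniformly bounded in $n$, so $R := \lim_{N\to\infty}R_N$ exists as a genuine convergent sum.

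To finish, I would use the trivial identity $S_N - S_{N-1} = (-1)^N P_N$ together with the convergence $P_N\to P_\infty := (a_1q,\ldots,a_rq;q)_\infty/(b_1q,\ldots,b_sq;q)_\infty$, valid for $|q|<1$. Passing $N$ through the even and odd subsequences, the convergence of $S_{N-1}+S_N$ combined with the known limits of $S_N-S_{N-1}$ forces each parity of partial sum to converge individually, yielding simultaneously $\widetilde{\Phi}_{+} + \widetilde{\Phi}_{-} = R$ and $\widetilde{\Phi}_{-} - \widetilde{\Phi}_{+} = P_\infty$. Solving this $2\times 2$ linear system gives $2\widetilde{\Phi}_{\pm}\pm P_\infty = R$, which is the claimed identity. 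The one genuinely nontrivial step is the opening algebraic manipulation that exhibits the summand as the forward difference $(-1)^n(P_{n-1}-P_n)$; everything afterward is formal rearrangement together with a standard geometric-series estimate on the unit disk.
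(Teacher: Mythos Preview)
Your argument is correct and is essentially the telescoping approach the paper uses (via the proofs of Theorems~\ref{thm1} and~\ref{thm2}): the key observation in both is that the $n$th summand on the right equals $(-1)^n(P_{n-1}-P_n)$. Your packaging via the single partial-sum identity $R_N=S_{N-1}+S_N$ is a bit more streamlined than the paper's, which separately telescopes the infinite product and pairs consecutive terms of the ``strange'' series before comparing, but the underlying mechanism is the same.
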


From this identity we can fully generalize the previous corollaries.

\begin{corollary}
We have that $1/2$ times the right-hand side of Theorem \ref{thm3} is the Ces\`{a}ro sum of the ``strange'' function $\widetilde{\Phi}(a_1,...,a_r;b_1,...,b_s; q)$.
\end{corollary}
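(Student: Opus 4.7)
The plan is essentially to combine the two cases ($+$ and $-$) of Theorem \ref{thm3} and invoke the observation already noted in the paper, namely that the Ces\`{a}ro sum of an oscillating series equals $\tfrac12(S_+ + S_-)$ whenever both limits exist.

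First I would note that Theorem \ref{thm3} already guarantees the existence of both $\widetilde{\Phi}_{+}$ and $\widetilde{\Phi}_{-}$ for $0 < |q| < 1$: the right-hand side converges absolutely there, and so does the infinite product $(a_1 q,\dots,a_r q;q)_\infty / (b_1 q,\dots, b_s q;q)_\infty$, so the identity solves for each of $\widetilde{\Phi}_\pm$ as a difference of convergent objects. Writing the two specializations of Theorem \ref{thm3} and adding them, the $\pm$ infinite product cancels, leaving
\begin{equation*}
2\bigl(\widetilde{\Phi}_{+} + \widetilde{\Phi}_{-}\bigr) = 2\,R(q),
\end{equation*}
where $R(q)$ denotes the right-hand side of Theorem \ref{thm3}. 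Hence $\tfrac12(\widetilde{\Phi}_{+} + \widetilde{\Phi}_{-}) = \tfrac12 R(q)$.

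Next I would justify that this half-sum is genuinely the Ces\`{a}ro sum. Let $s_N$ denote the $N$th partial sum of $\widetilde{\Phi}(a_1,\dots,a_r;b_1,\dots,b_s;q)$. The arithmetic mean of the first $2N$ partial sums decomposes as
\begin{equation*}
\frac{1}{2N}\sum_{k=1}^{2N} s_k \;=\; \frac{1}{2}\cdot\frac{1}{N}\sum_{j=1}^{N} s_{2j-1} \;+\; \frac{1}{2}\cdot\frac{1}{N}\sum_{j=1}^{N} s_{2j}.
\end{equation*}
Since the odd-indexed subsequence converges to $\widetilde{\Phi}_{+}$ and the even-indexed subsequence converges to $\widetilde{\Phi}_{-}$, the standard fact that Ces\`{a}ro means preserve limits of convergent sequences shows this average tends to $\tfrac12(\widetilde{\Phi}_{+} + \widetilde{\Phi}_{-})$. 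The means over the first $2N+1$ partial sums differ from those over the first $2N$ by a quantity of order $1/N$ (since individual $s_k$ are bounded, being convergent along each parity class), so the full sequence of Ces\`{a}ro means converges to the same value.

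Combining the two steps yields that the Ces\`{a}ro sum of $\widetilde{\Phi}$ equals $\tfrac12 R(q)$, which is exactly the claim. There is no real obstacle here: the substantive content is entirely contained in Theorem \ref{thm3}, and what remains is the elementary bookkeeping with Ces\`{a}ro averages of two convergent subsequences of opposite parity.
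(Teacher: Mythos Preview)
Your argument is correct and follows essentially the same approach as the paper: average the $+$ and $-$ cases of Theorem \ref{thm3} to cancel the infinite product, then invoke the fact (already recorded in the paper after the definition of Ces\`{a}ro sum) that the Ces\`{a}ro sum equals $\tfrac12(S_+ + S_-)$ when both parity limits exist. The paper's own proof is a single sentence to this effect; you have simply supplied the routine verification of that fact in more detail.
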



The takeaway is that the $N$th partial sum of an alternating ``strange'' series oscillates asymptotically as $N\to \infty$ between $\frac{1}{2}(S(q)+(-1)^N P(q))$, where $S$ is an Eulerian infinite series and $P$ is an infinite product as given in Theorem \ref{thm3}. 
We recover Theorem \ref{thm1} from Theorem \ref{thm3} as the case $a_1=1,\  a_i=b_j=0$ for all $i>1,j\geq 1$. Theorem \ref{thm2} is the case $b_1=-1,\  a_i=b_j=0$ for all $i\geq1, j>1$. 

Considering these connections together with diverse connections made by Kontsevich's $F(q)$ to important objects of study \cite{BFR, BOPR,Zagier_Vassiliev}, it seems the ephemeral 
``strange'' functions almost ``enter into mathematics as beautifully''\footnote{To redirect Ramanujan's words} as their 
convergent 
(but still eccentric) 
relatives, mock theta functions. 

%

\begin{remark}
It follows from Euler's continued fraction formula \cite{Euler} that 
alternating ``strange'' functions have 
representations such as 
\begin{equation*}
\widetilde{F}(q)=\frac{{1}}{1+\frac{1-q}{q+\frac{1-q^2}{q^2+\frac{1-q^3}{q^3+...}}}},\  \  \  \  \widetilde{\phi}(q) =  \frac{{1}}{1+\frac{1}{q+\frac{1+q}{q^2+\frac{1+q^2}{q^3+...}}}}.
\end{equation*}
These ``strange'' continued fractions diverge on $0<|q|<1$ with successive convergents equal to the corresponding partial sums of the series representation. We can substitute continued fractions for the Kontsevich-style summations in the theorems if we give a similarly modified definition of convergence; for example, we can write $$f(q)=\frac{{2}}{1+\frac{1}{q+\frac{1+q}{q^2+\frac{1+q^2}{q^3+...}}}}\pm \frac{1}{(-q;q)_{\infty}}$$ where we take the $\pm$ sign to be positive if we define the limit of the continued fraction to be the limit of the even convergents, and negative if instead we use odd convergents.
\end{remark}

\section{Proofs of results}

In this section we quickly prove the preceding theorems, 
and justify the corollaries.

\begin{proof}[Proof of Theorem \ref{thm1}]
Using telescoping series to find that
\begin{flalign*}
(q;q)_{\infty}
= 1-\sum_{n=0}^{\infty}(q;q)_n \left( 1 - (1-q^{n+1})\right)
= 1-\sum_{n=0}^{\infty}q^{n+1}(q;q)_n,
\end{flalign*}
and combining this functional equation with the right side of (\ref{sigma}) above, easily gives
\begin{equation*}
\sigma(q)-(q;q)_{\infty}=2\sum_{n=0}^{\infty}q^{2n+1}(q;q)_{2n}.
\end{equation*}
On the other hand, manipulating symbols heuristically (for we are working with a divergent series $\widetilde{F}$) 
suggests we can rewrite 
\begin{flalign*}
\widetilde{F}(q)=\sum_{n=0}^{\infty}\left( (q;q)_{2n}-(q;q)_{2n+1}\right)&=\sum_{n=0}^{\infty}(q;q)_{2n}\left(1- (1-q^{2n+1})\right)=\sum_{n=0}^{\infty}q^{2n+1}(q;q)_{2n},
\end{flalign*}
which is a rigorous statement 
if by convergence on the left we mean the limit as $N\to\infty$ of partial sums $\sum_{n=0}^{2N-1}(-1)^n (q;q)_n$. 
We can also choose the alternate coupling of summands to similar effect, e.g. considering here the partial sums $1+\sum_{n=1}^{N-1}[(q;q)_{2n}-$ $(q;q)_{2n-1}]-(q;q)_{2N-1}$ as $N\to \infty$. 
Combining the above considerations 
proves the 
theorem for $|q|<1$, which one finds to agree with computational examples. 
\end{proof}

\begin{proof}[Proof of Theorem \ref{thm2}]
Following the 
formal steps that prove Theorem \ref{thm1} 
above, we can use 
\begin{equation*}
\frac{1}{(-q;q)_{\infty}}=1-\sum_{n=0}^{\infty}\frac{1}{(-q;q)_n}\left( 1 - \frac{1}{1+q^{n+1}}\right)=1-\sum_{n=1}^{\infty}\frac{q^n}{(-q;q)_n}
\end{equation*}
and rewrite the related ``strange'' series 
\begin{equation*}
\widetilde{\phi}(q)=\sum_{n=0}^{\infty} \frac{1}{(-q;q)_{2n}}\left( 1-\frac{1}{1+q^{2n+1}}\right)=\sum_{n=0}^{\infty} \frac{q^{2n+1}}{(-q;q)_{2n+1}},
\end{equation*}
which of course fails to converge for $0<|q|<1$ on the left-hand side 
but makes sense if we use the modified definition of convergence 
in Section \ref{Sect1}, to yield the identity in the theorem 
(which is, again, borne out by computational examples).
\end{proof}

\begin{proof}[Proof of Theorem \ref{thm3}]
Using the definitions of the polynomials $\alpha_r(X),\beta_s(X)$, then following the exact steps that yield Theorems \ref{thm1} and \ref{thm2}, i.e., manipulating and comparing telescoping-type series with the same modified definition of convergence, 
gives the theorem. 
\end{proof}

\begin{proof}[Proof of corollaries]
Clearly, for an alternating ``strange'' series in which the odd and even partial sums each approach a different limit, the average of these two limits will equal the Ces\`{a}ro sum of the series.
\end{proof}

\end{document}